\documentclass[12pt]{amsart}

\usepackage{amsmath}
\usepackage{amssymb}
\usepackage{amsaddr}
\usepackage{amsthm}
\usepackage{diagbox}

\newtheorem{theorem}{Theorem}
\newtheorem{prop}{Proposition}
\newtheorem{lemma}{Lemma}

\def\F{\mathbb{F}}
\def\ndiv{\nmid}
\def\eps{\varepsilon}
\def\sumrhochi{\sum_{\rho\text{ of }L_\chi}}

\DeclareMathOperator{\chr}{char}

\title{The Counting function for Elkies primes}
\author{Meher Elijah Lippmann and Kevin J.~McGown}
\address[A1,A2]{California State University, Chico, USA}

\begin{document}

\maketitle

\begin{abstract}
Let $E$ be an elliptic curve over a finite field $\mathbb{F}_q$ where $q$ is a prime power. 
The Schoof--Elkies--Atkin (SEA) algorithm is a standard method for counting the number of
$\mathbb{F}_q$-points on $E$.
The asymptotic complexity of the SEA algorithm depends on the distribution of the so-called Elkies primes.

Assuming GRH, we prove that the least Elkies prime is bounded by
$(2\log 4q+4)^2$ when $q\geq 10^9$.
This is the first such explicit bound in the literature.
Previously, Satoh and Galbraith established an upper bound of $O((\log q)^{2+\eps})$.

Let $N_E(X)$ denote the number of Elkies primes less than $X$.
Assuming GRH, we also show
$$
  N_E(X)=\frac{\pi(X)}{2}+O\left(\frac{\sqrt{X}(\log qX)^2}{\log X}\right)
  \,.
$$

\end{abstract}

\section{Introduction}\label{S:1}

Let $E$ be an elliptic curve over $\F_q$ where $q$ is a prime power.
It is of high interest to compute $\#E(\F_q)$, 
where $E(\F_q)$ denotes the group of $\F_q$ points on $E$.
This is of theoretical significance in addition to its importance for cryptographic applications.
The Schoof–Elkies–Atkin (SEA) algorithm is a standard method for
accomplishing this task.
For more details see~\cite{MR0777280},~\cite{MR1486831},
\mbox{\cite[Algorithm 17.25]{MR2162716}}.
One observes that the asymptotic time complexity of the algorithm depends
dramatically  on the distribution of the so-called Elkies primes.

We define $t_E=q+1-\#E(\F_q)$;
it is well known that this is the trace of the Frobenius endomorphism
on $E$.  We refer to a prime $\ell$ with
$(\ell,q)=1$
as an Elkies prime 
if the Legendre symbol $(\frac{t_E^2-4q}{\ell})$ equals $1$.
Primes
$(\ell,q)=1$
where the symbol equals $-1$ or $0$ are called Atkin primes.

If $t_E^2-4q=0$, then there are no Elkies primes; this is the case where $|t|=2\sqrt{q}$
and hence the curve is ``strongly maximal'' with respect to the Hasse bound.
However, this latter condition can only happen for supersingular curves.
We will assume throughout that $t_E^2-4q\neq 0$.
Notice that if $t_E^2-4q$ is a perfect square, then
any prime $\ell$ with $(\ell,t_E^2-4q)=1$ is an Elkies prime.  We will avoid
this trivial case by assuming $t_E^2-4q$ is not a perfect square.

In~\cite{MR2041073}, it is shown that the least Elkies prime is
$O((\log q)^{2+\eps})$.
However, it may be that explicit results are more relevant for some applications.
We give an explicit result that also offers a small asymptotic improvement.
Moreover, our proof is different from the one appearing in~\cite{MR2041073}
as we follow the approach of Montgomery--Lagarias--Odlyzko--Bach, rather
than the arguments of Ankeny.

\begin{theorem}\label{T:1}
Assume GRH.  For $q\geq 10^9$,
the least Elkies prime is bounded above by
$(2\log 4q+4)^2$.
In particular, the least Elkies prime is $O((\log q)^2)$.
\end{theorem}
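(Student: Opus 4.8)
The plan is to recast the problem in terms of a single quadratic Dirichlet character and then run the explicit-formula machinery of Montgomery, Lagarias--Odlyzko, and Bach. Write $\Delta=t_E^2-4q$; by the Hasse bound and our standing hypotheses $\Delta$ is a negative non-square, so $K=\mathbb{Q}(\sqrt{\Delta})$ is an imaginary quadratic field, with associated primitive real character $\chi=\chi_{d_K}$ of modulus $|d_K|$. A short case check gives $|d_K|\le 4q$: if $t_E$ is odd then $\Delta\equiv 1\pmod 4$, so $d_K\mid\Delta$ and $|d_K|\le|\Delta|<4q$; if $t_E$ is even then $K=\mathbb{Q}\bigl(\sqrt{(t_E/2)^2-q}\bigr)$ with $\bigl|(t_E/2)^2-q\bigr|\le q$, so $|d_K|\le 4q$. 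For an odd prime $\ell\nmid\Delta$ one has $\bigl(\tfrac{\Delta}{\ell}\bigr)=\chi(\ell)$; hence, apart from the $O(\log q/\log\log q)$ primes dividing $2\Delta$ and the prime $\chr\F_q$, a prime $\ell$ is Elkies exactly when $\chi(\ell)=1$. Assume, for contradiction, that there is no Elkies prime below $X=(2\log 4q+4)^2$, and set $L=\log X$.

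The idea is then to detect split primes through $\zeta_K=\zeta\cdot L(\cdot,\chi)$, i.e.\ through a smoothed sum
\[
S=\sum_{n\ge 1}\frac{\Lambda(n)\bigl(1+\chi(n)\bigr)}{\sqrt n}\,\phi(\log n),
\]
for a carefully chosen even test function $\phi\ge 0$ essentially concentrated near $\pm L$. Applying Weil's explicit formula to $\zeta$ and to $L(s,\chi)$ and summing, the pole of $\zeta$ contributes a main term $\widehat\phi(i/2)+\widehat\phi(-i/2)=2\int\phi(v)\cosh(v/2)\,dv$, of order $e^{L/2}=\sqrt X$; the conductor and Gamma factors contribute a term of size at most a constant times $\log 4q$; and --- this is where GRH enters --- since every nontrivial zero has real part $\tfrac12$, each summand of the two zero-sums is $\widehat\phi$ evaluated at a \emph{real} point, so $\sum_\rho\widehat\phi(\gamma_\rho)$ is controlled by an expression of the shape $\sum_\rho(\tfrac14+\gamma_\rho^2)^{-1}\le\log 4q+O(\log\log q)$ once $\phi$ is chosen with $\widehat\phi(t)\ll(\tfrac14+t^2)^{-1}$-type decay. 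The point of the Montgomery--Lagarias--Odlyzko--Bach optimization of $\phi$ is to make the resulting constraint as strong as possible, namely $\sqrt X>2\log 4q+(\text{an explicit constant})$.

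For the matching upper bound one argues elementarily under the contradiction hypothesis. The summand of $S$ is nonzero only for prime powers $\ell^k$ with $\chi(\ell)\neq-1$: either $\ell$ ramified, or $\ell$ inert with $k$ even, or $\ell$ split --- and under the hypothesis a split $\ell$ must divide $2\Delta$ or equal $\chr\F_q$. Weighting by $\Lambda(n)/\sqrt n$, each of these families contributes only $O(\sqrt{\log q})$ or $O(\log\log q)$: for the split and ramified primes one uses $\prod_{\ell\mid\Delta}\ell\le|\Delta|\le 4q$ together with $\sum_{\ell\le y}\tfrac{\log\ell}{\sqrt\ell}\ll\sqrt y$, and for the inert squares one uses $\sum_{\ell\le y}\tfrac{\log\ell}{\ell}\ll\log y$. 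Hence $S=o(\sqrt X)$. Comparing with the lower bound, the choice $\sqrt X=2\log 4q+4$ makes the $\zeta$-main term strictly exceed the sum of the zero, conductor, and bad-prime contributions, a contradiction; the hypothesis $q\ge 10^{9}$ is what makes the accumulated explicit constants --- from the zero-density estimate, the archimedean integral, and the prime-power tails --- fit inside the slack ``$+4$''.

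The step I expect to be the real obstacle is the fully explicit version of the second paragraph: selecting the smoothed kernel $\phi$ and bounding $\sum_\rho\widehat\phi(\gamma_\rho)$ together with the archimedean integral sharply enough that the final inequality reads $\sqrt X\le 2\log 4q+O(1)$ with a small enough $O(1)$. The crude conditional bound $\psi(x,\chi)\ll\sqrt x(\log qx)^2$ only yields a least Elkies prime of size $O((\log q)^4)$, so this refinement is essential and is where all the constant-chasing lives. By contrast, the preliminary reduction, the bad-prime estimates, and the classical effective form of the $\zeta$-side under RH are routine.
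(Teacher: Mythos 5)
Your overall strategy is the same as the paper's: attach the quadratic character coming from $t_E^2-4q$, assume no Elkies prime up to $X=(2\log 4q+4)^2$, and compare a smoothed prime sum against the explicit formula in the Montgomery--Lagarias--Odlyzko--Bach style, with GRH entering through $|X^\rho|=\sqrt X$ and a zero-density-type bound $\sum_\rho 1/|\rho(\rho+1)|\ll\log m$. The cosmetic differences (you normalize by $1/\sqrt n$ and pass to the fundamental discriminant $d_K$, the paper keeps the kernel $1-n/X$, works with the possibly imprimitive Kronecker character $\chi_E$ modulo $m=|t_E^2-4q|$, and corrects for imprimitivity with an $\omega(m)\le 1.3841\log m/\log\log m$ term) do not change the architecture, and your asymptotic bookkeeping (main term of order $\sqrt X$ in your normalization, bad primes dividing $2\Delta$ and the characteristic contributing $O(\sqrt{\log q})$, inert prime powers contributing $O(\log X)$) is sound and would deliver $O((\log q)^2)$.

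However, the theorem is an \emph{explicit} statement --- least Elkies prime at most $(2\log 4q+4)^2$ for $q\ge 10^9$ --- and precisely the part that makes it explicit is missing from your proposal; indeed you flag it yourself as ``the real obstacle.'' You never fix a test function $\phi$, never prove a numerical bound for $\sum_\rho\widehat\phi(\gamma_\rho)$ (your asserted $\sum_\rho(\tfrac14+\gamma_\rho^2)^{-1}\le\log 4q+O(\log\log q)$ carries an unspecified implied constant, and the multiplicative constant here is exactly what decides whether one ends up below $2\log 4q$ or not), and never verify that the archimedean, conductor, imprimitivity, and prime-power terms fit inside the additive slack ``$+4$'' once $q\ge 10^9$. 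The paper's proof consists essentially of carrying out this computation: Proposition~\ref{prop:charsum} with the kernel $1-n/X$ and the tabulated constants $C_1(X_0)$, $C_2(m_0,X_0)$ (via the bound $\sum_\rho|1/\rho(\rho+1)|\le\tfrac23(\log m+5/3)$), a separate finite computation disposing of small moduli $m<2^{11}$ (for which the analytic constants would be too weak), and the final chain $\sqrt X\le 2C_2\log 4q+2(C_1+3)+\cdots\le 1.902\log 4q+6.094<2\log 4q+4$, where the last inequality is exactly where $q\ge 10^9$ is used. Without choosing the kernel and producing these numbers (including the small-conductor case, which your sketch does not address at all), the stated bound is not proved; as written, your argument establishes only the $O((\log q)^2)$ conclusion conditional on an optimization you have not performed.
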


Let $N_E(X)$ denote the number of Elkies primes up to $X$.
It is stated in Appendix A of~\cite{MR2041073} that estimating $N_E(X)$ seems to be difficult
(even under GRH).  We give such an estimate with error term, but will not
pursue making the constant explicit.

\begin{theorem}\label{T:2}
Assume GRH.  We have
$$
  N_E(X)=\frac{\pi(X)}{2}+O\left(\frac{\sqrt{X}(\log qX)^2}{\log X}\right)
  \,.
$$

\end{theorem}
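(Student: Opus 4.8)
The plan is to count Elkies primes $\ell\le X$ by detecting the condition $\left(\frac{D}{\ell}\right)=1$ where $D=t_E^2-4q$, using the fact that this quantity equals $\tfrac12\left(1+\left(\frac{D}{\ell}\right)\right)$ whenever $\ell\nmid D$. Let $\chi$ be the Kronecker symbol associated to the (non-square, nonzero) discriminant $D$; this is a primitive quadratic character to a modulus $|D_0|$ dividing $|D|\le 4q+1+4\sqrt q$, so $\log|D_0|\ll \log q$. Then
\begin{equation*}
  N_E(X) \;=\; \sum_{\substack{\ell\le X\\ \ell\nmid qD}} \frac{1}{2}\bigl(1+\chi(\ell)\bigr) \;+\; O(\omega(qD))
  \;=\; \frac{\pi(X)}{2} \;+\; \frac12\sum_{\ell\le X}\chi(\ell) \;+\; O\!\left(\frac{\log qX}{\log\log qX}\right),
\end{equation*}
where the $O$-term absorbs the finitely many primes dividing $qD$ and the discrepancy between $\sum_{\ell\le X}\chi(\ell)$ and its restriction to $\ell\nmid qD$. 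So the whole problem reduces to bounding the character sum over primes $S(X):=\sum_{\ell\le X}\chi(\ell)$ under GRH.

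For that I would pass to the von Mangoldt–weighted sum $\psi(X,\chi)=\sum_{n\le X}\Lambda(n)\chi(n)$, which under GRH (applied to the Dirichlet $L$-function $L(s,\chi)$) satisfies the explicit-formula bound $\psi(X,\chi)\ll \sqrt X(\log qX)^2$ — this is the standard conditional estimate (see e.g. the Montgomery–Lagarias–Odlyzko–Bach circle of ideas already invoked for Theorem~\ref{T:1}), with the $\log q$ coming from the conductor of $\chi$. Removing the prime-power contributions ($\sum_{k\ge2}\sum_{\ell^k\le X}\chi(\ell^k)\log\ell \ll \sqrt X$) gives $\sum_{\ell\le X}\chi(\ell)\log\ell \ll \sqrt X(\log qX)^2$, and then partial summation to strip the weight $\log\ell$ yields
\begin{equation*}
  S(X) \;=\; \sum_{\ell\le X}\chi(\ell) \;\ll\; \frac{\sqrt X(\log qX)^2}{\log X}.
\end{equation*}
Substituting this bound into the display for $N_E(X)$ produces exactly the claimed asymptotic, with the error from the primes dividing $qD$ being of strictly smaller order than $\sqrt X(\log qX)^2/\log X$ and hence harmlessly absorbed.

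The main obstacle — really the only nontrivial input — is the conditional character-sum estimate $\psi(X,\chi)\ll\sqrt X(\log qX)^2$ with the conductor dependence made correct; everything else is bookkeeping. One has to be a little careful that $\chi$ is genuinely a primitive Dirichlet character (replace $D$ by its fundamental discriminant $D_0$, noting $\left(\frac{D}{\ell}\right)=\left(\frac{D_0}{\ell}\right)$ for $\ell\nmid D$) so that GRH for $L(s,\chi)$ is literally applicable, and that the hypothesis $D$ not a perfect square guarantees $\chi$ is non-principal so that the main term $\pi(X)/2$ is correct rather than $\pi(X)$. Since the paper explicitly declines to make the implied constant explicit, I would not track constants through the explicit formula, and I would cite the standard GRH bound for $\psi(X,\chi)$ rather than rederive it. Finally, partial summation introduces no new difficulty: writing $\theta(X,\chi)=\sum_{\ell\le X}\chi(\ell)\log\ell$, one has $S(X)=\theta(X,\chi)/\log X+\int_2^X \theta(t,\chi)/(t(\log t)^2)\,dt$, and the integral is dominated by its behavior near $t=X$, contributing the same order $\sqrt X(\log qX)^2/\log X$.
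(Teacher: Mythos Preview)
Your proposal is correct and follows essentially the same route as the paper: express $N_E(X)$ as $\pi(X)/2$ plus half a character sum over primes (up to bounded corrections from primes dividing $qD$), invoke the GRH bound $\sum_{\ell\le X}\chi(\ell)\log\ell\ll\sqrt{X}(\log qX)^2$, and strip the logarithmic weight by partial summation. The only cosmetic difference is that you pass through $\psi(X,\chi)$ and then remove prime powers, whereas the paper cites the bound for the prime sum $\theta(X,\chi)$ directly.
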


Notice that the previous theorem immediately implies (under GRH)
that $N_E(X)\sim\pi(X)/2$, as expected.
It is noted in~\cite{MR3179585} that this corollary follows from~\cite{MR2041073};
however, neither paper gives a bound on the error $N_E(X)-\pi(X)/2$.
We note that the paper~\cite{MR3179585} studies the distribution of Elkies primes
on average over all elliptic curves, which is a different perspective than we consider here.

\section{A character sum estimate}

We will require the next proposition in the sequel.
It follows from the work of Bach
(see~\cite{MR807772, MR1023756, MR1433261}) but a little effort is needed for its proof.
Notice that the modulus of the character is not required to be prime.

\begin{prop}\label{prop:charsum}
Assume GRH.
\begin{enumerate}
\item
When $X\geq X_0\geq 2$ we have
$$
  \left|
  \sum_{n\leq X}\Lambda(n)\left(1-\frac{n}{X}\right)-\frac{X}{2}
  \right| \leq C_1(X_0)\sqrt{X}
  \,.
$$
\item
Suppose $\chi$ is a non-principal primitive character modulo $m$.
When \mbox{$X>X_0\geq 10$} and $m\geq m_0$ we have

$$
  \left|
  \sum_{n\leq X}\Lambda(n)\chi(n)\left(1-\frac{n}{X}\right)
  \right| \leq C_2(m_0,X_0) \sqrt{X}\log m
  \,,
$$
where values of the constants are given in Tables~\ref{tab:1},~\ref{tab:2},~\ref{tab:3},
generated by Equations (\ref{E:const1}), (\ref{E:const2}), (\ref{E:const3}).
\end{enumerate}
\end{prop}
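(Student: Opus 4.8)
The plan is to start from the standard explicit-formula-type identity relating a smoothed prime sum to a sum over zeros of the relevant $L$-function. Write $\psi_1(X,\chi)=\sum_{n\le X}\Lambda(n)\chi(n)(1-n/X)$; a contour integration of $-\frac{L'}{L}(s,\chi)\cdot\frac{X^s}{s(s+1)}$ over a vertical line to the right of $s=1$, then moved left, produces
\[
  \psi_1(X,\chi)=\delta_\chi\frac{X}{2}-\sum_{\rho}\frac{X^\rho}{\rho(\rho+1)}
  -\frac{L'}{L}(0,\chi)\cdot(\text{lower order})+\cdots,
\]
where $\delta_\chi=1$ if $\chi$ is principal and $0$ otherwise, the sum is over nontrivial zeros $\rho$ of $L(s,\chi)$, and the ellipsis collects the contribution of the trivial zeros and of the pole of $\frac{X^s}{s(s+1)}$ at $s=0,-1$. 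For part (1) one applies this with $\chi$ trivial (so $L=\zeta$), and the main term $X/2$ comes from the pole at $s=1$; for part (2), $\delta_\chi=0$ and there is no main term. In both cases GRH puts every nontrivial zero on $\Re s=1/2$, so $|X^\rho|=\sqrt{X}$, and the zero-sum is bounded by $\sqrt X\sum_\rho|\rho(\rho+1)|^{-1}$.

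The next step is to bound $\sum_\rho|\rho(\rho+1)|^{-1}$ explicitly in terms of the conductor. This is where the work of Bach enters: under GRH one has an estimate of the shape $\sum_\rho\frac{1}{|\rho|^2}\le a\log m+b\log\log m+c$ (for $\chi$ mod $m$), and the companion bounds for $\frac{L'}{L}(0,\chi)$ and the archimedean terms are likewise $O(\log m)$. Summing everything and tracking the explicit constants, one gets the stated form $C_2(m_0,X_0)\sqrt X\log m$, with the dependence on $X_0$ coming from absorbing the secondary $\log\log m$ and the constant-order terms into $\log m$ once $m\ge m_0$, and from bounding the tail of the Mellin kernel for $X>X_0$; similarly part (1) gives $C_1(X_0)\sqrt X$. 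The constants are then read off by making each inequality in the chain effective and optimizing, which is precisely what Equations (\ref{E:const1})--(\ref{E:const3}) and Tables~\ref{tab:1}--\ref{tab:3} record.

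I expect the main obstacle to be entirely bookkeeping rather than conceptual: carefully shifting the contour past $s=0$ and $s=-1$ (where the kernel $\frac{X^s}{s(s+1)}$ has poles) while keeping every resulting term explicit, and then assembling Bach's inequalities — which are stated in several places and in slightly different normalizations — into a single clean bound with honest constants that behave well as functions of $m_0$ and $X_0$. A secondary subtlety is that the modulus is not assumed prime, so one must use the conductor-dependent (not just $\ell$-dependent) forms of the zero-density and $\frac{L'}{L}(0,\chi)$ estimates; since primitive characters are assumed, this is available directly from Bach's work. Once these pieces are in place, parts (1) and (2) follow by the same argument with and without the principal-character main term.
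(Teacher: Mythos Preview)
Your approach is correct and matches the paper's: explicit formula via the $\frac{X^s}{s(s+1)}$ kernel, GRH to force $|X^\rho|=\sqrt X$, then a Bach-type bound on $\sum_\rho|\rho(\rho+1)|^{-1}$. Two refinements in the paper that you gloss over: (i) the argument splits on the parity $\chi(-1)=\pm 1$, since the trivial zeros (and hence the residues at $s=0,-1$) sit differently, producing the two separate formulas (\ref{E:const2}) and (\ref{E:const3}); (ii) rather than invoking independent $O(\log m)$ bounds for $L'/L(0,\chi)$ and the Laurent constant $b_\chi$, the paper expresses each of these back in terms of the \emph{same} zero sum $\sum_\rho|\rho(\rho+1)|^{-1}$, so that a single use of Bach's inequality $\sum_\rho|\rho(\rho+1)|^{-1}\le\tfrac{2}{3}(\log m+\tfrac{5}{3})$ controls everything---in particular no $\log\log m$ term ever appears.
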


\begin{table}[h]
 \footnotesize{
  \begin{tabular}{|c|c|c|c|c|c|c|c|c|c|}
    \hline
    $x_0$ & $10^{1}$ & $2^{4}$ & $2^{5}$ & $2^{6}$ & $2^{7}$ & $2^{8}$ & $2^{9}$ & $2^{10}$ & $2^{11}$  \\
    \hline
    $C_1$ & $0.629$ & $0.507$ & $0.048$ & $0.047$ & $0.047$ & $0.047$ & $0.047$ & $0.047$ & $0.047$  \\
    \hline    
  \end{tabular}
  \caption{Values of $C_1(X_0)$}
  \label{tab:1}
  }
\end{table}
\begin{table}[h]
\footnotesize{
  \begin{tabular}{|c|c|c|c|c|c|c|c|c|c|c|c|}
    \hline
    \diagbox[dir=SE]{{$x_0$}}{{$C_2$}}{{$m_0$}} & $3$ & $2^{2}$ & $2^{3}$ & $2^{4}$ & $2^{5}$ & $2^{6}$ & $2^{7}$ & $2^{8}$ & $2^{9}$ & $2^{10}$ & $2^{11}$  \\
    \hline
    $10$ & $3.475$ & $2.993$ & $2.379$ & $2.072$ & $1.888$ & $1.766$ & $1.678$ & $1.612$ & $1.561$ & $1.520$ & $1.487$  \\
    \hline
    $2^{4}$ & $3.051$ & $2.632$ & $2.099$ & $1.832$ & $1.672$ & $1.565$ & $1.489$ & $1.432$ & $1.387$ & $1.352$ & $1.323$  \\
    \hline
    $2^{5}$ & $2.621$ & $2.267$ & $1.816$ & $1.591$ & $1.455$ & $1.365$ & $1.301$ & $1.252$ & $1.215$ & $1.185$ & $1.160$  \\
    \hline
    $2^{6}$ & $2.335$ & $2.025$ & $1.629$ & $1.431$ & $1.313$ & $1.233$ & $1.177$ & $1.134$ & $1.101$ & $1.075$ & $1.054$  \\
    \hline
    $2^{7}$ & $2.140$ & $1.859$ & $1.501$ & $1.323$ & $1.215$ & $1.144$ & $1.093$ & $1.054$ & $1.025$ & $1.001$ & $0.981$  \\
    \hline
    $2^{8}$ & $2.003$ & $1.743$ & $1.413$ & $1.247$ & $1.148$ & $1.082$ & $1.035$ & $0.999$ & $0.972$ & $0.949$ & $0.931$  \\
    \hline
    $2^{9}$ & $1.908$ & $1.663$ & $1.350$ & $1.194$ & $1.101$ & $1.038$ & $0.994$ & $0.960$ & $0.934$ & $0.914$ & $0.897$  \\
    \hline
    $2^{10}$ & $1.840$ & $1.606$ & $1.307$ & $1.157$ & $1.068$ & $1.008$ & $0.965$ & $0.933$ & $0.908$ & $0.888$ & $0.872$  \\
    \hline
    $2^{11}$ & $1.793$ & $1.565$ & $1.276$ & $1.131$ & $1.044$ & $0.986$ & $0.945$ & $0.914$ & $0.890$ & $0.870$ & $0.855$  \\
    \hline       
  \end{tabular}
  \caption{Values of $C_2(X_0,m_0)$ for $\chi(-1)=-1$}
  \label{tab:2}
  }
\end{table}
\begin{table}[h]
\footnotesize{
  \begin{tabular}{|c|c|c|c|c|c|c|c|c|c|c|c|}
    \hline
    \diagbox[dir=SE]{{$x_0$}}{{$C_2$}}{{$m_0$}} & $3$ & $2^{2}$ & $2^{3}$ & $2^{4}$ & $2^{5}$ & $2^{6}$ & $2^{7}$ & $2^{8}$ & $2^{9}$ & $2^{10}$ & $2^{11}$  \\
    \hline
    $10$ & $3.850$ & $3.290$ & $2.577$ & $2.221$ & $2.007$ & $1.865$ & $1.763$ & $1.686$ & $1.627$ & $1.580$ & $1.541$  \\
    \hline
    $2^{4}$ & $3.455$ & $2.952$ & $2.312$ & $1.992$ & $1.800$ & $1.672$ & $1.580$ & $1.512$ & $1.458$ & $1.416$ & $1.381$  \\
    \hline
    $2^{5}$ & $3.018$ & $2.582$ & $2.026$ & $1.748$ & $1.581$ & $1.470$ & $1.390$ & $1.331$ & $1.284$ & $1.247$ & $1.217$  \\
    \hline
    $2^{6}$ & $2.695$ & $2.309$ & $1.819$ & $1.574$ & $1.426$ & $1.328$ & $1.258$ & $1.206$ & $1.165$ & $1.132$ & $1.105$  \\
    \hline
    $2^{7}$ & $2.449$ & $2.104$ & $1.665$ & $1.445$ & $1.314$ & $1.226$ & $1.163$ & $1.116$ & $1.079$ & $1.050$ & $1.026$  \\
    \hline
    $2^{8}$ & $2.262$ & $1.948$ & $1.549$ & $1.350$ & $1.230$ & $1.150$ & $1.093$ & $1.050$ & $1.017$ & $0.990$ & $0.969$  \\
    \hline
    $2^{9}$ & $2.118$ & $1.830$ & $1.462$ & $1.278$ & $1.168$ & $1.094$ & $1.041$ & $1.002$ & $0.971$ & $0.947$ & $0.927$  \\
    \hline
    $2^{10}$ & $2.009$ & $1.739$ & $1.396$ & $1.224$ & $1.121$ & $1.052$ & $1.003$ & $0.967$ & $0.938$ & $0.915$ & $0.896$  \\
    \hline
    $2^{11}$ & $1.926$ & $1.671$ & $1.346$ & $1.184$ & $1.086$ & $1.021$ & $0.975$ & $0.940$ & $0.913$ & $0.892$ & $0.874$  \\
    \hline       
  \end{tabular}
  \caption{Values of $C_2(X_0,m_0)$ for $\chi(-1)=1$}
  \label{tab:3}
  }
\end{table}

\begin{proof}
Let $X\geq 2$.
Using the explicit formula, in the form of
Lemma~\ref{L:explicit},
we find
\begin{align*}
  \left|\sum_{n\leq X}\Lambda(n)(1-n/X)-\frac{X}{2}\right|
  &\leq
  \sqrt{X}\sum_{\substack{\zeta(\rho)=0\\\Re(\rho)=1/2}}\left|\frac{1}{\rho(\rho+1)}\right|+1.84
  \,.
\end{align*}
Here we have used
$$
  \left|\frac{1}{X}\cdot\frac{\zeta'(-1)}{\zeta(-1)}-\frac{\zeta'(0)}{\zeta(0)}
  -\frac{1}{2}\log\left(1-\frac{1}{X^2}\right)-\frac{1}{2x}\left(1+\frac{2}{X-1}\right)
  \right|\leq\frac{\zeta'(0)}{\zeta(0)}\leq 1.84
  \,.
$$
Combining with the following (see Equations 10 and~11 from \S $12$ of~\cite{MR1790423})
$$
  \sum_\rho\left|\frac{1}{\rho(\rho+1)}\right|
  \leq
  \sum_\rho
  \left(
  \frac{1}{\rho}+\frac{1}{\overline{\rho}}\right)=\gamma+2-\log(4\pi)
$$
gives part (1) of the proposition with
\begin{equation}\label{E:const1}
  C_1=\gamma+2-\log(4\pi)+\frac{1.84}{\sqrt{X_0}}
  \,.
\end{equation}
It remains to prove part (2).

We first deal with the case $\chi(-1)=-1$.
Lemma~\ref{L:explicit}
gives
\begin{align*}
\left|\sum_{n\leq X}\Lambda(n)\chi(n)(1-n/X)\right|
&\leq
\sqrt{X}\sum_{\substack{L(\rho,\chi)=0\\\Re(\rho)=1/2}}
\left|\frac{1}{\rho(\rho+1)}\right|
+\left|\frac{L'(0,\chi)}{L(0,\chi)}\right|+\left|\frac{b_\chi}{X}\right|
\\[1ex]
&
+\left|\frac{\log X}{X}+\frac{1}{2}\log\left(1+\frac{2}{X-1}\right)+\frac{1}{2x}\log\left(1-\frac{1}{X^2}\right)\right|
\,.
\end{align*}
Note that
\begin{align*}
\left|\frac{b_\chi}{X}\right|
&\leq \frac{1}{X}\left|\frac{L'(2,\chi)}{L(2,\chi)}\right|+\frac{1}{X}\sum\frac{1}{|\rho(\rho+1)|}+\frac{2}{X}\sum\frac{1}{|\rho(2-\rho)|}
+\left|\frac{\log 2-1}{X}\right|
\\[1.5ex]
&\leq
\frac{0.87}{X}+\frac{3}{X}
\sum_{\substack{L(\rho,\chi)=0\\\Re(\rho)=1/2}}
\left|\frac{1}{\rho(\rho+1)}\right|
\end{align*}
and
\begin{align}
\label{E:temp1}
\left|
\frac{L'(0,\chi)}{L(0,\chi)}
\right|
&=
\left|-\sum\frac{2}{\rho(2-\rho)}+\frac{L'(2,\chi)}{L(2,\chi)}-1\right|
\leq
2\sum_{\substack{L(\rho,\chi)=0\\\Re(\rho)=1/2}}
\left|\frac{1}{\rho(\rho+1)}\right|
+1.57
\,.
\end{align}
For $X\geq 10$, this all leads to
$$
\left|\sum_{n\leq X}\Lambda(n)\chi(n)(1-n/X)\right|
\leq
(\sqrt{X}+2+3/X)\sum_{\substack{L(\rho,\chi)=0\\\Re(\rho)=1/2}}
\left|\frac{1}{\rho(\rho+1)}\right|
+2
$$
Using estimates from~\cite{MR1433261} we have
\begin{equation}\label{E:temp2}
\sum_{\substack{L(\rho,\chi)=0\\\Re(\rho)=1/2}}
\left|\frac{1}{\rho(\rho+1)}\right|
\leq
\frac{4}{3}
\sum_\rho\frac{1}{|\sigma-\rho|^2}=\sum_\rho\left(\frac{1}{\sigma-\rho}+\frac{1}{\sigma-\overline{\rho}}\right)
\leq
\frac{2}{3}(\log m+5/3)
\end{equation}
and the second result follows with 
\begin{equation}\label{E:const2}
  C_2=\frac{2}{3}\left(1+\frac{2}{\sqrt{X_0}}+\frac{3}{X_0\sqrt{X_0}}\right)\left(1+\frac{5}{3\log m_0}\right)+\frac{2}{\sqrt{X_0}\log m_0}
  \,.
\end{equation}

Next we turn to the case of $\chi(-1)=1$.
The proof is similar.  We have
\begin{align*}
\left|\sum_{n\leq X}\Lambda(n)\chi(n)(1-n/X)\right|
&\leq
\sqrt{X}\sum_{\substack{L(\rho,\chi)=0\\\Re(\rho)=1/2}}
\left|\frac{1}{\rho(\rho+1)}\right|
+\left|\frac{1}{X}\frac{L'(-1,\chi)}{L(-1,\chi)}\right|+\left|b_\chi\right|
\\
&
+\log X+\left|\frac{1}{2x}\log\left(1+\frac{2}{X-1}\right)+\frac{1}{2}\log\left(1-\frac{1}{X^2}\right)\right|
\,,
\end{align*}
and we make use of
\begin{align*}
\left|b_\chi\right|
&\leq
\sqrt{X}\sum_{\substack{L(\rho,\chi)=0\\\Re(\rho)=1/2}}
\left|\frac{1}{\rho(\rho+1)}\right|+0.57
\,,
\\
\left|
\frac{L'(-1,\chi)}{L(-1,\chi)}
\right|
&\leq
\sum\frac{2}{|\rho(2-\rho)|}
+
\sum\frac{1}{|\rho(\rho+1)|}
+
\frac{1}{2}
\left|\frac{\Gamma'(1)}{\Gamma(1)}-\frac{\Gamma'(-1/2)}{\Gamma(-1/2)}\right|
+\left|
\frac{L'(2,\chi)}{L(2,\chi)}
\right|
\\
&\leq
3\sum\frac{1}{|\rho(\rho+1)|}
+0.88
\,.
\end{align*}
For $X\geq 3$, this all leads to
$$
\left|\sum_{n\leq X}\Lambda(n)\chi(n)(1-n/X)\right|
\leq
(\sqrt{X}+2+3/X)\sum_{\substack{L(\rho,\chi)=0\\\Re(\rho)=1/2}}
\left|\frac{1}{\rho(\rho+1)}\right|
+\log X +1
\,,
$$
which gives the result with
\begin{equation}\label{E:const3}
  C_2=\frac{2}{3}\left(1+\frac{2}{\sqrt{X_0}}+\frac{3}{X_0\sqrt{X_0}}\right)\left(1+\frac{5}{3\log m_0}\right)+\frac{1+\log X_0}{\sqrt{X_0}\log m_0}
  \,.
\end{equation}

\end{proof}

\section{Proof of Theorem~\ref{T:1}}

In light of our assumptions (stated in Section~\ref{S:1})
we can use the Kronecker symbol to define a Dirichlet character
$$\chi_E(n)=\left(\frac{t_E^2-4q}{n}\right)\,.$$ 
Moreover, $\chi_E$ is a non-principal character modulo $m=|t_E^2-4q|\geq 3$
satisfying $\chi_E(-1)=-1$.
Let $\chi$ denote the primitive character of modulus $m'$ that induces $\chi_E$.
Notice that $m'\leq m\leq 4q$.  
Set $X=(2\log 4q+4)^2\geq 2325$.
We will show that there is an Elkies prime less than $X$.
By direct computation, there is always an Elkies prime less than $40$ when $m< 2^{11}$,
and hence the theorem is trivial in this case.  Henceforth, we assume $m\geq 2^{11}$.

Since our character $\chi_E$ of modulus $m$ is not primitive,
the difference between the sum
$$
  \sum_{n\leq X}\Lambda(n)\chi_E(n)\left(1-\frac{n}{X}\right)
$$
and the corresponding sum where $\chi_E$ is replaced by the primitive
character $\chi$ is bounded (in absolute value) by
$$
  \sum_{p|m}\sum_{\substack{n\leq X\\n=p^k}}\Lambda(n)\left(1-\frac{n}{X}\right)
  \leq
  \sum_{p|m}\log p\sum_{\substack{n\leq X\\n=p^k}} 1\leq \omega(m)\log X
  \leq D
  \frac{\log m\log X}{\log\log m}
$$
By~\cite{MR736719}, we know that for $m\geq 3$, one has
$$
\omega(m)\leq 1.3841\frac{\log m}{\log \log m}
$$
which means we can take $D=1.3841$.


Now we begin the proof in earnest.
Suppose that the least Elkies prime is greater than $X$.
That is, we have that $\chi_E(p)\neq 1$ for all primes $p\leq X$.
We seek a contradiction.
Consider the sum
$$
  S(X)=\sum_{n\leq X}\Lambda(n)\chi_E(n)\left(1-\frac{n}{X}\right)
$$
We write $S=S_1+S_2+S_3$, where
\begin{align*}
  S_1(X)&=
  -\sum_{n\leq X}\Lambda(n)\left(1-\frac{n}{X}\right)
  \\
    S_2(X)&=
  \sum_{p\leq X}\Lambda(p)(\chi_E(p)+1)\left(1-\frac{p}{X}\right)
  \\
  S_3(X)&=
  \sum_{\substack{n\leq X\\n=p^k\\k\geq 2}}\Lambda(n)(\chi_E(n)+1)\left(1-\frac{n}{X}\right)
\end{align*}
By hypothesis,
$$
  S_2(X)\leq\sum_{p|m}\log p\left(1-\frac{p}{X}\right)\leq \log(\prod_{p|m}p)\leq \log m
$$
Moreover,
according to
Corollary~4.5 of~\cite{MR3745073},
$$
\psi(X)-\theta(X)\leq  (1+1.47\cdot 10^{-7})\sqrt{X}+1.78\sqrt[3]{X}+\leq (1+\delta)\sqrt{X}
$$
where $\delta=1/2$,
and therefore
$$
|S_3(X)|\leq 2(\psi(X)-\theta(X))\leq 2(1+\delta)\sqrt{X}
\,.
$$
By Proposition~\ref{prop:charsum}
$$
  |S_1|\geq \frac{X}{2}-C_1\sqrt{X}
$$
Therefore
\begin{align*}
|S(X)|
&\geq
\frac{X}{2}-(C_1+2(1+\delta))\sqrt{X}-\log m
\end{align*}
On the other hand,
$$
  |S(X)|\leq C_2\sqrt{X}\log m+ D\frac{\log m\log X}{\log\log m}
$$
Hence
$$
  \frac{X}{2}\leq C_2\sqrt{X}\log m+(C_1+2(1+\delta))\sqrt{X}+\log m+ D\frac{\log m\log X}{\log\log m}
$$
Using $m\leq 4q$, this leads to
\begin{align*}
\sqrt{X}
&\leq 2 C_2\log 4q + 2(C_1+2(1+\delta))+\frac{2\log 4q}{\sqrt{X}}+\frac{2D\log(4q)\log X}{\sqrt{X}\log\log 4q}
\\
&\leq
1.902\log 4q+6.094
\\[1ex]
&<
2\log(4q) + 4
.
\end{align*}
This gives a contradiction,
thereby proving the theorem.

\section{Proof of Theorem~\ref{T:2}}

We have
\begin{align*}
  N_E(X)
  &=
  \frac{1}{2}\sum_{\ell\leq X}\left[\left(\frac{t_E^2-4q}{\ell}\right)+1\right]
  -a_E(X)
  \\[1ex]
  &=
  \frac{\pi(X)}{2}+\frac{1}{2}
  \sum_{\ell\leq X}\left(\frac{t_E^2-4q}{\ell}\right)
  -a_E(X)
\end{align*}
In the above
$$
a_E(X)=\begin{cases}
1/2 & \ell\mid t\,,\;\chr(\F_q)<X\\
1 & \ell\ndiv t\,,\;\chr(\F_q)<X\\
0 & \text{otherwise}
\end{cases}
$$

Using the notation from the previous section, it suffices to bound the sum
$$
\sum_{\ell\leq X}\chi_E(\ell)
\,.
$$

Partial summation gives
$$
  \sum_{\ell\leq X}\chi(\ell)=\frac{1}{\log X}\sum_{\ell\leq X}\chi(\ell)\Lambda(\ell)
  +\int_2^X \left(\sum_{\ell\leq t}\chi(\ell)\Lambda(\ell)\right)\frac{dt}{t(\log t)^2}
  \,.
$$
Exercise 7.4.9 of~\cite{MR2376618} leads to a bound of the form:
\begin{equation}\label{E:bound}
\left|\sum_{\ell\leq X}\chi(\ell)\Lambda(\ell)\right|\leq C \sqrt{X}(\log mX)^2
\,,\; t>2\,.
\end{equation}

This leads to
\begin{align*}
  \left|\sum_{\ell\leq X}\chi(\ell)\right|
  &\leq
  \frac{C\sqrt{X}(\log mX)^2}{\log X}
  +C\int_2^X \frac{(\log tm)^2dt}{\sqrt{t}(\log t)^2}
  \,.
\end{align*}
Using $X<m$ the second term is bounded by
$4C(\log m)^2\int_2^xt^{-1/2}(\log t)^{-2}dt$.
Moreover, we have
$$
  \int_2^X\frac{dt}{\sqrt{t}(\log t)^2}\leq 2\frac{\sqrt{X}}{\log X}
  \,.
$$
This yields
\begin{align*}
  \frac{C\sqrt{X}(\log mX)^2}{\log X}
  +\frac{8C\sqrt{X}(\log m)^2}{\log X}
  &=
  C((\log mX)^2+8(\log m)^2)\frac{\sqrt{X}}{\log X}
  \\
  &\leq
  9C(\log mX)^2\frac{\sqrt{X}}{\log X}
\end{align*}
Putting this all together shows
$$
  \left|
  N_E(X)-\frac{\pi(X)}{2}
  \right|\leq
  (9C/2)\left(\frac{\sqrt{X}(\log 4qX)^2}{\log X}\right)+1
  \,,
$$
which completes the proof.


\section{Appendix:  An explicit formula}

Although various explicit formulas associated to Dirichlet $L$-functions are certainly well-known,
we could not find in the literature this particular explicit formula, with all terms rendered visible, for
an arbitrary Dirichlet character.  As this is essential for the derivation of our results, we give a brief
argument together with relevant residue calculations.
The first of the three equations that follow appears in~\cite{MR1074573}.

\begin{lemma}\label{L:explicit}
Suppose $X\geq 1$.
We have
\begin{align*}
  \sum_{n\leq X}'\Lambda(n)\left(1-\frac{n}{X}\right)
  &=
  \frac{X}{2}
   -\sum_\rho\frac{X^{\rho}}{\rho(\rho+1)}
   +
     \frac{1}{X}\frac{\zeta'(-1)}{\zeta(-1)}-\frac{\zeta'(0)}{\zeta(0)}
   \\
   &-
      \frac{1}{2}\log\left(1-\frac{1}{X^2}\right)-\frac{1}{2x}\log\left(1+\frac{2}{X-1}\right)
      \,.
\end{align*}
Suppose $\chi$ is a primitive Dirichlet character modulo $m$.
If  $\chi(-1)=-1$, then we have
\begin{align*}
  \sum_{n\leq X}\Lambda(n)\chi(n)\left(1-\frac{n}{X}\right)
  &=
   -\sumrhochi
   \frac{X^{\rho}}{\rho(\rho+1)}
   - \frac{L'(0)}{L(0)}
   +
     \frac{b_\chi}{X}+
  \frac{\log X}{X}
   \\+&
  \frac{1}{2}\log\left(1+\frac{2}{X-1}\right)+\frac{1}{2x}\log\left(1-\frac{1}{X^2}\right)
  \,,
\end{align*}
where $b_\chi$ is the constant term in the Laurent series expansion of
$L'(s,\chi)/L(s,\chi)$ at $s=-1$.
If  $\chi(-1)=1$, then we have
\begin{align*}
  \sum_{n\leq X}\Lambda(n)\chi(n)\left(1-\frac{n}{X}\right)
  &=
   -\sumrhochi
   \frac{X^{\rho}}{\rho(\rho+1)}
   +\frac{1}{X}\frac{L'(-1)}{L(-1)}-b_\chi-\log X
  \\
  &
  -\frac{1}{2}\log\left(1-\frac{1}{X^2}\right)-\frac{1}{2x}\log\left(1+\frac{2}{X-1}\right)
  \,,
\end{align*}
where $b_\chi$ is the constant term in the Laurent series
expansion of $L'(s,\chi)/L(s,\chi)$ at $s=0$.

\end{lemma}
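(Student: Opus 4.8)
The plan is to derive all three identities from a single application of Perron's formula followed by a shift of the line of integration to $\Re(s)=-\infty$; the three cases will differ only in which trivial zeros of the relevant $L$-function collide with the poles of the kernel $1/(s(s+1))$.

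The starting point is the kernel identity
$$
  \frac{1}{2\pi i}\int_{c-i\infty}^{c+i\infty}\frac{y^{s}}{s(s+1)}\,ds=
  \begin{cases}1-1/y,& y\geq 1,\\ 0,& 0<y<1,\end{cases}\qquad(c>0),
$$
which follows from the classical (untruncated) Perron integral for $y^{s}/s$ together with $\frac{1}{s(s+1)}=\frac1s-\frac1{s+1}$; note that both sides vanish at $y=1$. Taking $y=X/n$, multiplying by $\Lambda(n)$, and summing over $n$, the Dirichlet series $\sum_n\Lambda(n)n^{-s}=-\zeta'(s)/\zeta(s)$ (respectively $-L'(s,\chi)/L(s,\chi)$) converges absolutely on a line $\Re(s)=c>1$, where the kernel decays like $|s|^{-2}$, so one may interchange summation and integration to get
$$
  \sum_{n\leq X}'\Lambda(n)\Bigl(1-\tfrac nX\Bigr)=\frac{1}{2\pi i}\int_{c-i\infty}^{c+i\infty}\Bigl(-\tfrac{\zeta'(s)}{\zeta(s)}\Bigr)\frac{X^{s}}{s(s+1)}\,ds,
$$
and similarly in the two character cases. (The prime on the left is harmless, since $\Lambda(1)=0$ and the weight $1-n/X$ vanishes at $n=X$; for the last of the three formulas one also needs $X>1$.)

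Next I would move the contour to the left, integrating around the rectangle with corners $c\pm iT$ and $-U\pm iT$: let $T\to\infty$ along heights that avoid ordinates of zeros so the horizontal sides vanish (using the standard bounds $\zeta'/\zeta(s)=O(\log(|s|+2))$ and $L'/L(s,\chi)=O(\log(m(|s|+2)))$ valid to the left of the critical strip away from zeros, against the $|s|^{-2}$ decay of the kernel), and then let $U\to\infty$ through values avoiding the negative integers, so that the far-left side tends to $0$ (since $|X^{s}|=X^{-U}$ and $X>1$). What remains is the sum of the residues of the integrand: at $s=1$ (present only for $\zeta$, from its simple pole, contributing $X/2$); at each nontrivial zero $\rho$, contributing $-X^{\rho}/(\rho(\rho+1))$ counted with multiplicity, hence $-\sum_\rho X^{\rho}/(\rho(\rho+1))$; at the trivial zeros; and at the kernel poles $s=0$ and $s=-1$. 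The trivial zeros of $\zeta$, and of $L(s,\chi)$ when $\chi(-1)=1$, lie at $s=-2,-4,-6,\dots$, while for $\chi(-1)=-1$ they lie at $s=-1,-3,-5,\dots$; in each case the infinite series of residues over trivial zeros is summed in closed form using $\sum_{k\geq 1}z^{2k}/(2k)=-\tfrac12\log(1-z^{2})$ and $\sum_{k\geq 1}z^{2k-1}/(2k-1)=\tfrac12\log\frac{1+z}{1-z}$ with $z=1/X$, which is exactly what generates the $\log(1-X^{-2})$ and $\log(1+2/(X-1))$ terms in the statement.

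The case distinction then amounts to whether a trivial zero sits on a pole of the kernel. When it does not — for $\zeta$ at $s=0$ and $s=-1$, for $\chi(-1)=-1$ at $s=0$, for $\chi(-1)=1$ at $s=-1$ — the pole is simple, and its residue is $-\zeta'/\zeta$ (respectively $-L'/L$) evaluated there times $\lim_{s\to s_0}(s-s_0)X^{s}/(s(s+1))$, which yields the $\zeta'(0)/\zeta(0)$, $\zeta'(-1)/\zeta(-1)$, $L'(0,\chi)/L(0,\chi)$, $L'(-1,\chi)/L(-1,\chi)$ terms. When it does — for $\chi(-1)=-1$ at $s=-1$, for $\chi(-1)=1$ at $s=0$ — the integrand has a double pole; writing $-L'(s,\chi)/L(s,\chi)=-(s-s_0)^{-1}-b_\chi+O(s-s_0)$ against the local Laurent expansion of $X^{s}/(s(s+1))$ at $s_0$ (where the differentiation of $X^{s}$ produces the factor $\log X$) gives the $b_\chi$ and $\log X$ contributions. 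I expect this last step to be the main obstacle: carrying the constants $b_\chi$, $\zeta'(\pm 1)/\zeta$, etc., and the various $1/X$ terms correctly through the double-pole residue and the Laurent expansions of $X^{s}/s$ and $X^{s}/(s+1)$ near $s_0$, and then checking that, after the partial cancellation against the telescoping trivial-zero series, everything matches the displayed right-hand sides; alongside this one must (routinely, but with care) verify that the horizontal and far-left contour segments genuinely vanish in the limit. Assembling all the residues with the correct signs then gives the three identities.
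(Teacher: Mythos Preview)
Your plan is correct and is essentially the same approach as the paper's. The only cosmetic difference is in how the integral is packaged: the paper writes the integrand as $-G(s)X^s/s$ with $G(s)=X\,\zeta'(s)/\zeta(s)-\zeta'(s-1)/\zeta(s-1)$ (so each nontrivial zero $\rho$ contributes two residues, at $s=\rho$ and $s=\rho+1$, which are then combined, and the division by $X$ is done at the end), whereas you use the equivalent single-kernel form $(-\zeta'/\zeta)(s)\,X^{s}/(s(s+1))$; after the obvious change of variables these are the same integral, and your residue bookkeeping at the double poles and trivial zeros matches the paper's table.
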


\begin{proof}
We begin by proving the first equation that involves the zeros of $\zeta(s)$.
We are interested in the sum
\begin{equation}
  g(X):=\sum_{n\leq X}\Lambda(n)(X-n)
  \,.
\end{equation}
Along these lines, we consider the function
\begin{align*}
  G(s):=\sum_{n=1}^\infty\frac{\Lambda(n)(X-n)}{n^s}
  &=
  X\sum_n\frac{\Lambda(n)}{n^s}-\sum_n\frac{\Lambda(n)}{n^{s-1}}
  &=
  X\frac{\zeta'(s)}{\zeta(s)}-\frac{\zeta'(s-1)}{\zeta(s-1)}
  \,.
\end{align*}
We evaluate the following integral in two different ways:
$$
\mathcal{I}(X):=
  \frac{1}{2\pi i}
    \int_{\sigma-i\infty}^{\sigma+i\infty}\left\{-G(s)\right\}X^s\frac{ds}{s}
    \,.
$$
Integrating the Dirichlet series for $G(s)$ term-by-term and using
$$
  \int_{\sigma-i\infty}^{\sigma+i\infty}y^s\;\frac{ds}{s}
  =\begin{cases} 1 & y>1\\ 1/2 & y=1\\ 0 & 0<y<1\,\end{cases}
$$
yields $\mathcal{I}(X)=g_0(X)$.  Here $g_0(X)=g(X)$ when $X$ is non-integral.
Next we evaluate the integral using the Residue Theorem. 
Standard arguments show that $\mathcal{I}(X)$ is
the sum of the residues in the half-plane where $\Re(s)<\sigma$.
The residues of the integrand at $s=1$ and $s=2$
are $X^2$ and $-X^2/2$, respectively.
The residue at $s=0$ is
$$
  \frac{\zeta'(-1)}{\zeta(-1)}-X\frac{\zeta'(0)}{\zeta(0)}
  \,.
$$
Given a non-trivial zero $\rho$, the residues at $s=\rho$ and $s=\rho+1$
are $-X^{\rho+1}/\rho$
and $X^{\rho+1}/(\rho+1)$, respectively.
The residues coming from the trivial zeros of $\zeta(s)$,
occur at $s=-2k$ and $s=-2k+1$, for $k\geq 1$, and
are equal to $\frac{X^{-2k+1}}{2k}$ and 
$\frac{X^{-2k+1}}{-2k+1}$ respectively.
Next we sum over $k$ and use the following:
\begin{equation}
  \frac{1}{2}\log\left(1-\frac{1}{X^2}\right)=-\sum_{k\geq 1}\frac{X^{-2k}}{2k}
\end{equation}
\begin{equation}
  \frac{1}{2}\log\left(1+\frac{2}{X-1}\right)=\sum_{k\geq 1}\frac{X^{-2k+1}}{2k-1}
\end{equation}
Putting this all together and dividing by $X$ gives the result.

The proof for $L(s,\chi)$ is similar, with the main difference being that
$L(s,\chi)$ does not have a pole at $s=1$.
The residues arising from non-trivial zeros $\rho$ of $L(s,\chi)$
are exactly as described in above.

When $\chi(-1)=-1$, the trivial zeros of $L(s,\chi)$ occur $s=-1,-3,-5,\dots$.
There is a double-pole at $s=0$ whose residue equals
$$
  -X\frac{L'(0)}{L(0)} + b_\chi + \log X
  \,,
$$
where $b_\chi$ is the constant term in the Laurent series
expansion of
$L'(s-1,\chi)/L(s-1,\chi)$ at $s=0$.
Having already dealt with $s=0$, the remaining
residues arising from the trivial zeros occur at $s=-2k+1$
and $s=-2k$ for $k\geq 1$,
where the residues are equal to
$\frac{X^{-2k+2}}{2k-1}$ and $\frac{X^{-2k}}{-2k}$ respectively.
Summing these residues over $k\geq 1$ gives the result.

When $\chi(-1)=1$, the trivial zeros of $L(s,\chi)$ occur $s=0,-2,-4,-6,\dots$;
the argument is essentially the same, but the details are different.
The main thing to point out is that our definition of $b_\chi$ is slightly different in this case;
here we denote by $b_\chi$ the constant term in the Laurent expansion of
 $L'(s)/L(s)$ at $s=0$.
We include table of the
residues in all three cases, so that the reader can verify the details.
\end{proof}

{\footnotesize
\begin{table}\label{tab:residues}
\begin{tabular}{c|c|c|c|c|c|c|c}
& $0$ & $1$ & $2$ & $\rho$ & $\rho+1$ & $-2k$ & $-2k+1$\\[1ex]
\hline
$\zeta(s)$ & $\displaystyle \frac{\zeta'(-1)}{\zeta(-1)}-X\frac{\zeta'(0)}{\zeta(0)}$ & $X^2$ &
$\displaystyle\frac{X^2}{2}$ & $\displaystyle-\frac{X^{\rho+1}}{\rho}$ & $\displaystyle\frac{X^{\rho+1}}{\rho+1}$ & 
$\displaystyle\frac{X^{-2k+1}}{2k}$ & $\displaystyle\frac{X^{-2k+1}}{-2k+1}$\\
\hline
$\substack{L(s,\chi)\\\chi(-1)=-1}$
& $\displaystyle   -X\frac{L'(0)}{L(0)} + b_\chi + \log X$ & $0$ &
$0$ & $\displaystyle-\frac{X^{\rho+1}}{\rho}$ & $\displaystyle\frac{X^{\rho+1}}{\rho+1}$ & 
$\displaystyle\frac{X^{-2k}}{-2k}$ & $\displaystyle\frac{X^{-2k+2}}{2k-1}$\\
\hline
$\substack{L(s,\chi)\\\chi(-1)=1}$ & $\displaystyle    \frac{L'(-1)}{L(-1)}-X b_\chi-X\log X$ & $0$ &
$0$ & $\displaystyle-\frac{X^{\rho+1}}{\rho}$ & $\displaystyle\frac{X^{\rho+1}}{\rho+1}$ & 
$\displaystyle\frac{X^{-2k+1}}{2k}$ & $\displaystyle\frac{X^{-2k+1}}{-2k+1}$
\end{tabular}
\caption{Residues of $-G(s)X^s/s$}
\end{table}
}

%

Finally, we will require the following lemma to deal with the quantity $b_\chi$
showing up in the explicit formula.

\begin{lemma}
If $\chi(-1)=-1$, then
$$
b_\chi=\frac{L'(2,\chi)}{L(2,\chi)}
+ \sumrhochi\frac{1}{\rho(\rho+1)} -  \sumrhochi\frac{2}{\rho(2-\rho)}
+\log 2 -1
\,.
$$
If $\chi(-1)=1$, then
$$
b_\chi=\frac{L'(2,\chi)}{L(2,\chi)}
-
\sumrhochi\frac{2}{\rho(2-\rho)}
\,.
$$
\end{lemma}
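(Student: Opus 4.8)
The plan is to read off $b_\chi$ from the partial-fraction (Hadamard) expansion of $L'/L$ and to eliminate the unknown additive constant $B(\chi)$ by comparison with the value at $s=2$. Write $\mathfrak a\in\{0,1\}$ with $\chi(-1)=(-1)^{\mathfrak a}$, and recall (Davenport~\cite{MR1790423}, \S 12) that for a primitive character $\chi$ modulo $m$,
$$
\frac{L'}{L}(s,\chi)=B(\chi)-\frac12\log\frac m\pi-\frac12\frac{\Gamma'}{\Gamma}\!\left(\frac{s+\mathfrak a}{2}\right)+\sumrhochi\left(\frac1{s-\rho}+\frac1\rho\right),
$$
the sum over the nontrivial zeros being taken in the symmetric sense. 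First I would evaluate this identity at $s=2$, where every term is finite and $\tfrac1{2-\rho}+\tfrac1\rho=\tfrac2{\rho(2-\rho)}$; this expresses $B(\chi)-\tfrac12\log(m/\pi)$ in terms of $L'(2,\chi)/L(2,\chi)$, the number $\tfrac12\tfrac{\Gamma'}{\Gamma}\!\left(\tfrac{2+\mathfrak a}{2}\right)$, and the absolutely convergent sum $\sumrhochi\tfrac2{\rho(2-\rho)}$.

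Next I would extract the constant term of the Laurent expansion of $L'/L(s,\chi)$ at the relevant trivial zero: at $s=0$ when $\mathfrak a=0$, and at $s=-1$ when $\mathfrak a=1$. In each case the $\Gamma$-factor supplies, through $\tfrac{\Gamma'}{\Gamma}(z)=-\tfrac1z-\gamma+O(z)$, the unavoidable simple pole together with the constant $\tfrac\gamma2$. For the sum over zeros I would use $\tfrac1{s-\rho}+\tfrac1\rho=\tfrac{s}{\rho(s-\rho)}$: at $s=0$ this vanishes term-by-term, whereas at $s=-1$ it equals $\tfrac1{\rho(\rho+1)}$, so that $\sumrhochi\bigl(\tfrac1{-1-\rho}+\tfrac1\rho\bigr)=\sumrhochi\tfrac1{\rho(\rho+1)}$ is an absolutely convergent series. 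Subtracting the $s=2$ evaluation then removes $B(\chi)-\tfrac12\log(m/\pi)$ and leaves only $\Gamma$-values and a difference of zero-sums. When $\mathfrak a=0$ the value $\tfrac{\Gamma'}{\Gamma}(1)=-\gamma$ cancels the $\tfrac\gamma2$, and one obtains $b_\chi=L'(2,\chi)/L(2,\chi)-\sumrhochi\tfrac2{\rho(2-\rho)}$; when $\mathfrak a=1$ the value $\tfrac{\Gamma'}{\Gamma}(\tfrac32)=\tfrac{\Gamma'}{\Gamma}(\tfrac12)+2=2-\gamma-2\log2$ is what produces the additive constant involving $\log 2$, and reassembling the zero-sums gives the claimed identity.

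The point needing the most care is the bookkeeping for the conditionally convergent series $\sumrhochi\bigl(\tfrac1{s-\rho}+\tfrac1\rho\bigr)$. One should observe that it defines a function holomorphic off the zeros — the tail behaves like $\tfrac{s}{\rho(s-\rho)}=O(|\rho|^{-2})$, so convergence is uniform on compacta and $\sum_\rho|\rho|^{-2}<\infty$ — so that evaluation at $s=0$ and $s=-1$ is meaningful, and the difference of two such evaluations may be rearranged, by reinstating and then cancelling the $\tfrac1\rho$ inside each bracket, into the difference of the two absolutely convergent series $\sumrhochi\tfrac1{\rho(\rho+1)}$ and $\sumrhochi\tfrac2{\rho(2-\rho)}$. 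A secondary point worth stating is that for $\mathfrak a=1$ the function $L(s,\chi)$ has a simple trivial zero at $s=-1$, hence $L'/L$ has residue $1$ there, so one first subtracts $\tfrac1{s+1}$ before reading off the constant term; this is precisely the quantity $b_\chi$ entering Lemma~\ref{L:explicit}.
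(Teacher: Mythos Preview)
Your approach is correct and is essentially the same as the paper's own proof: evaluate Davenport's partial-fraction expansion of $L'/L$ (Equation~17 on p.~83 of \cite{MR1790423}) at $s=2$ and at the relevant trivial zero ($s=-1$ when $\mathfrak a=1$, $s=0$ when $\mathfrak a=0$), then subtract to eliminate $B(\chi)-\tfrac12\log(m/\pi)$, leaving only digamma values and the two absolutely convergent zero-sums. Your write-up is more detailed than the paper's two-sentence sketch and handles the convergence bookkeeping explicitly, but the route is identical.
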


\begin{proof}
Consider Equation 17 on page 83 of~\cite{MR1790423} at $s=-1$ and $s=2$.
Subtracting these and appealing to known properties of the gamma function,
the results follows.
\end{proof}

  \bibliographystyle{plain}
  \bibliography{refs}

\end{document}